\documentclass[reqno]{amsart}
\usepackage[english]{babel}
\usepackage[latin1]{inputenc}
\usepackage{amsfonts}
\usepackage{amssymb}
\usepackage{amsmath}
\usepackage{amsthm}
\usepackage{graphicx}
\usepackage{epsfig}
\usepackage{fancyhdr}
\usepackage[all]{xy}

\usepackage{hyperref}
\usepackage{amsmath,amssymb}
\usepackage[bbgreekl]{mathbbol}

\numberwithin{equation}{section}

\newcommand{\C}{\mathbb{C}}

\newtheorem{theorem}{Theorem}[section]
\newtheorem{lemma}[theorem]{Lemma}

\newtheorem{defi}[theorem]{Definition}

\usepackage{amsmath}
\title [ H-integrable solutions to 2d dimensional canonical systems ]{H-Integrable Solutions of 2d-Dimensional Canonical Systems}
\author{Keshav Acharya}
\address{Embry-Riddle Aeronautical University \\ Daytona Beach, FL 32114, U. S. A.}
\date{March 2025}

\begin{document}

\maketitle

\begin{abstract} In this paper we study $2d$-dimensional canonical systems with positive semidefinite Hamiltonians $H(t)$ satisfying   $\operatorname{tr} H(t) \equiv 1$. We first establish that every such system possesses at least one nontrivial $H$-integrable solution, ensuring the existence of solutions whose energy with respect to $H$ remains finite. We then show that the space of all $H$-integrable solutions has dimension at most $d$.

Under a natural additional structural condition on $H(t)$, we prove that 
this upper bound is optimal. In this case, the system admits exactly $d$ 
linearly independent $H$-integrable solutions. Our approach combines symplectic geometry with the analysis of isotropic  subspaces.These results clarifies the   limit-point and limit-circle behavior associated with   higher-dimensional canonical systems.\\

\noindent \textbf{ Keywords:} Traced-normed canonical Systems, Isotropic Subspace,   $H$- integrable solution.  \end{abstract}

\section{Introduction}

Canonical systems play a central role in modern spectral theory and mathematical physics, serving as a unifying framework for a wide class of differential and operator equations. Originating from the work of de Branges \cite{deBranges1968}, these systems provide a powerful setting for studying  the spectral theory of several differential operators. In their general form, they are system of  first-order differential equation, expressed as  
\begin{equation} \label{ca}
J\,u'(t) = z\,H(t)\,u(t), \qquad t\in I\subset\mathbb{R},
\end{equation}
where $
J = 
\begin{pmatrix}
0 & I_d\\
-I_d & 0
\end{pmatrix},
$
\(u(t,z)\in\mathbb{C}^{2N}\) is a vector function, \(z\in\mathbb{C}\) is the spectral parameter, and \(H(t)\) is a locally integrable \(2d\times 2d\) a positive semidefinite matrix called the \emph{Hamiltonian}.  In the 2 dimensional case (\(d = 1\)), they unify Sturm--Liouville, Dirac, and Krein-string equations under a single first-order symplectic framework and  they have been well studied, see \cite{KRA, HSW, HSW1, HSW2, math1, Sakhnovich2010, Win} and the references therein. Extending the theory of these systems to higher dimension \(2d\) introduces rich geometric and analytic structures relevant to multi-component quantum systems, integrable models, and Hamiltonian dynamics~\cite{KLudu}. A key simplification arises from the \emph{trace-normalization} (or traced-normed) condition
\begin{equation}
\operatorname{tr}H(t)\equiv 1, \quad \text{for a.e. }t\in I.
\end{equation}
Any canonical system with positive-semidefinite \(H(t)\) can be transformed into trace-normed form by the reparameterization
\[
x(t) = \int_{t_0}^{t} \operatorname{tr}H(s)\,ds, \qquad 
\widetilde H(t) = [\operatorname{tr}H(t(x))]^{-1} H(t(x)).
\]
This normalization ensures that \(\|H(t)\|\le 1\), yielding uniform bounds on the coefficients and simplifying the spectral analysis. 
The trace-normed condition also provides a natural Hilbert-space structure: for any vector function \(u\), the \(H\)-inner product
\[
\langle u,v\rangle_H = \int_I u(t)^*\,H(t)\,v(t)\,dt,
\]
induces the weighted space \(L^2_H(I,\mathbb{C}^{2N})\).
Solutions \(u(t,z)\) satisfying
\[
\int_I u(t,z)^*H(t)u(t,z)\,dt < \infty,\]
are called \emph{\(H\)-integrable solutions}, and play a central role in defining the Weyl-Titchmarsh (M-)function, self-adjoint extensions, and spectral measures of the system.

The recent development of computational and analytical tools for \(2d\)-dimensional trace-normed canonical systems~\cite{KLudu, AL} builds directly upon the discrete groundwork laid in \cite{FischerRemling2009}. 
These systems with trace-normed and bounded Hamiltonian  opens research avenues in numerical spectral approximation, inverse problems, and stability analysis of multi-component physical models.

\section{Preliminaries and Results}

Let $ \mathbf I \subset \mathbb{R}$ be a bounded interval and $z \in \mathbb{C}$. If the Hamiltonian $H(t)$ is trace-normed and positive semidefinite, then there exists a unique vector-valued solution of the canonical system \eqref{ca} that is absolutely continuous almost everywhere on $\mathbf I,$ and satisfies the initial condition $u(t_0) = u_0$ for any fixed $u_0 \in \mathbb{R}^{2d}$, as shown in \cite{AL}. Consequently, for each $z \in \mathbb{C}$, the set of all solutions forms a vector space of dimension $2d$. By choosing $2d$ linearly independent solutions and arranging them as columns, we construct a $2d \times 2d$ matrix-valued function $Y(t, z)$, called the fundamental matrix solution of \eqref{ca}. Note that for any initial vector $v\in\mathbb C^{2d}$, the function $y(t)= Y(t;z)v$ is the unique absolutely continuous solution of \eqref{ca} with $y(0)=v$. These fundamental solutions obey some important properties. We begin by stating the following lemma from \cite{coddington-levinson}.

 \begin{lemma}[Gronwall - integral form]
Let $a<b$. Suppose $u:[a,b]\to[0,\infty)$ is continuous, $\alpha\in L^1([a,b])$ with
$\alpha\ge0$ a.e., and $v:[a,b]\to\mathbb{R}$ is absolutely continuous. If
\[
u(t)\le v(t)+\int_a^t \alpha(s)\,u(s)\,ds\qquad (t\in[a,b]),
\]
then
\[
u(t)\le v(t)\exp\!\Big(\int_a^t \alpha(s)\,ds\Big)\qquad (t\in[a,b]).
\]
\end{lemma}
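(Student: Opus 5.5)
The plan is the standard auxiliary-function argument. I first prove the general form of Gronwall's inequality and then specialise it. One caveat up front: the conclusion $u(t)\le v(t)\exp(\int_a^t\alpha)$ is only correct if $v$ is \emph{nondecreasing}, which is the case in every application we have in mind (for example $v$ constant, or $v(t)=\|u_0\|+\int_a^t(\cdot)$ with nonnegative integrand). Without monotonicity it fails: take $\alpha\equiv 1$ on $[0,1]$, $u\equiv 0$ and $v(t)=-t$; then the hypothesis holds but the claimed bound $0\le -te^{t}$ is false for $t>0$. So I will prove the general inequality and then note that the stated form follows under the (implicitly intended) assumption $v'\ge 0$ a.e.

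Set $R(t)=\int_a^t\alpha(s)u(s)\,ds$. Since $\alpha\in L^1$ and $u$ is continuous, hence bounded, $R$ is absolutely continuous. Its derivative is $R'=\alpha u$ a.e., and $R(a)=0$. Using the hypothesis and $\alpha\ge 0$,
\[
R'(t)=\alpha(t)u(t)\le \alpha(t)v(t)+\alpha(t)R(t)\quad\text{a.e.}
\]
Next let $A(t)=\int_a^t\alpha$, which is absolutely continuous, and multiply by the integrating factor $e^{-A(t)}$. The product $e^{-A}R$ is absolutely continuous, being a product of absolutely continuous functions on a compact interval. Its derivative $e^{-A}(R'-\alpha R)$ is bounded above by $e^{-A}\alpha v$ a.e. Integrating from $a$ to $t$ and using $R(a)=0$ gives
\[
R(t)\le\int_a^t \alpha(s)v(s)e^{A(t)-A(s)}\,ds .
\]
Hence
\[
u(t)\le v(t)+\int_a^t\alpha(s)v(s)e^{\int_s^t\alpha}\,ds.
\]

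To reach the stated form, I observe that $-\frac{d}{ds}e^{\int_s^t\alpha}=\alpha(s)e^{\int_s^t\alpha}$. Integrating by parts, which is legitimate because $v$ and $s\mapsto e^{\int_s^t\alpha}$ are both absolutely continuous, gives
\[
\int_a^t\alpha v e^{\int_s^t\alpha}\,ds=-v(t)+v(a)e^{A(t)}+\int_a^t v'(s)e^{\int_s^t\alpha}\,ds .
\]
Therefore
\[
u(t)\le v(a)e^{A(t)}+\int_a^t v'(s)e^{\int_s^t\alpha}\,ds .
\]
If $v'\ge0$ a.e., then $e^{\int_s^t\alpha}\le e^{A(t)}$ for $s\in[a,t]$. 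The right side is then at most $\big(v(a)+\int_a^t v'\big)e^{A(t)}=v(t)e^{A(t)}$, which is the claim.

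The only delicate points are the absolute-continuity and product-rule justifications, which are routine on a compact interval. The real obstacle is the monotonicity issue noted at the start. The proof will therefore state and use the nondecreasing assumption on $v$ explicitly, rather than attempt the bound for arbitrary absolutely continuous $v$.
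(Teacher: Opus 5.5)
Your argument is correct for the lemma you actually prove, which adds the hypothesis that \(v\) is nondecreasing. You are right that some such hypothesis is needed: with \(v\) merely absolutely continuous, the statement is false. The paper gives no proof of its own; it only cites Coddington--Levinson. Its one application, in Lemma~\ref{QR}, has \(v\equiv 1\), so your version covers what the paper needs.

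However, the counterexample you give is not one. With \(\alpha\equiv 1\), \(u\equiv 0\) and \(v(t)=-t\), the hypothesis reads \(0\le -t\), which fails for \(t>0\). In fact \(u\equiv 0\) can never work: the hypothesis then forces \(v\ge 0\), and the conclusion \(0\le v(t)e^{A(t)}\) is automatic.

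A valid counterexample on \([0,1]\) is \(\alpha\equiv 1\), \(u\equiv 1\), \(v(t)=1-t\). The hypothesis holds with equality, \(1=(1-t)+\int_0^t 1\,ds\). But \((1-t)e^{t}<1\) for all \(t\in(0,1]\), since \(\frac{d}{dt}\bigl((1-t)e^t\bigr)=-te^t<0\).

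Your own intermediate bound shows where the obstruction lies. Using \(\int_a^t\alpha(s)e^{\int_s^t\alpha}\,ds=e^{A(t)}-1\), one gets
\[
v(t)e^{A(t)}-\Bigl(v(t)+\int_a^t\alpha(s)v(s)e^{\int_s^t\alpha}\,ds\Bigr)=\int_a^t\alpha(s)\bigl(v(t)-v(s)\bigr)e^{\int_s^t\alpha}\,ds .
\]
The right-hand side is negative when \(v\) is strictly decreasing and \(\alpha>0\). When equality holds in the hypothesis, \(u\) is exactly the bracketed expression (the unique solution of the Volterra equation), so the claimed bound fails.

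The same identity gives a shorter finish than your integration by parts, which is valid but unnecessary. For nondecreasing \(v\), the right-hand side above is \(\ge 0\) directly, because the weight \(\alpha(s)e^{\int_s^t\alpha}\) is nonnegative. No derivative of \(v\) is needed, so the result holds for every nondecreasing \(v\), absolutely continuous or not; monotonicity already gives boundedness and measurability, hence \(\alpha v\in L^1\).

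Finally, phrase monotonicity as a sufficient condition. Saying the conclusion is ``only correct if \(v\) is nondecreasing'' overstates it.
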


\begin{lemma}\label{QR} Let $Y(t, z)$ is a fundamental matrix solution of \eqref{ca} with initial value $Y(0, z) = I$, $ I$ is an $2d \times 2d $ identity matrix. For each fixed $ R > 0 $ there exist constants $C_R, M_R>0$ (depending on $z$ and $\int_0^R\|H(t)\|\,dt$) such that
\[
\|Y(t;z)\|\le C_R e^{M_R(z)},\qquad 0\le t \le R.
\]
In particular, for every $R>0$ the matrix
\[
Q_R:=\int_0^R Y(t;z)^* H(t) Y(t;z)\,dt
\]
is finite (Hermitian, positive semidefinite).
\end{lemma}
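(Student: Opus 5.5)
The plan is to write the canonical system in integral form and apply the Gronwall lemma stated above to a scalar function built from the norm of $Y$. Since $J^{-1}=-J$, the equation $JY'=zHY$ with $Y(0,z)=I$ is equivalent to
\[
Y(t;z) = I - z\int_0^t J H(s) Y(s;z)\,ds, \qquad 0\le t\le R,
\]
which holds because $Y$ is absolutely continuous, as guaranteed by the existence result of \cite{AL}. Taking operator norms and using $\|J\|=1$ gives
\[
\|Y(t;z)\| \le 1 + |z|\int_0^t \|H(s)\|\,\|Y(s;z)\|\,ds .
\]

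Next we apply the Gronwall lemma with the following choices:
\begin{itemize}
\item $u(t)=\|Y(t;z)\|$, which is continuous and nonnegative;
\item $v\equiv 1$, which is absolutely continuous;
\item $\alpha(s)=|z|\,\|H(s)\|$, which is nonnegative and in $L^1([0,R])$.
\end{itemize}
Integrability of $\alpha$ holds because the trace normalization forces $0\le H(t)$ with eigenvalues summing to $1$, so $\|H(t)\|\le 1$ a.e. Gronwall then yields
\[
\|Y(t;z)\|\le \exp\Big(|z|\int_0^t\|H(s)\|\,ds\Big)\le C_R\, e^{M_R(z)},
\]
with $C_R=1$ and $M_R(z)=|z|\int_0^R\|H\|\,dt\le |z|R$. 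These constants depend only on $z$ and $\int_0^R\|H\|$, as claimed.

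For $Q_R$, the integrand satisfies
\[
\|Y^*HY\|\le \|Y\|^2\|H\|\le C_R^2 e^{2M_R},
\]
a bounded measurable function on $[0,R]$. Hence $Q_R$ exists and is finite, with $\|Q_R\|\le R\,C_R^2e^{2M_R}$. The integrand is Hermitian pointwise, so $Q_R$ is Hermitian. For any $w\in\mathbb C^{2d}$ we have
\[
w^*Q_Rw=\int_0^R (Yw)^*H(Yw)\,dt\ge 0
\]
because $H\ge0$, so $Q_R$ is positive semidefinite.

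The main point requiring care is the first step. One must justify the integral form, which uses absolute continuity and the local integrability of $H$, and check the hypotheses of the Gronwall lemma, namely continuity of $\|Y\|$ and $\alpha\in L^1$. Both follow directly from $Y$ being absolutely continuous and from the trace-norm bound $\|H\|\le1$. After that, everything is routine estimation.
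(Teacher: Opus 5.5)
Your proposal is correct and follows the paper's proof essentially step for step: the integral form $Y(t)=I-z\int_0^t JH(s)Y(s)\,ds$, the operator-norm estimate, and Gronwall with $v\equiv 1$ and $\alpha=|z|\,\|H\|$, giving $C_R=1$ and $M_R(z)=|z|\int_0^R\|H\|\,dt$. You go slightly further than the paper by explicitly checking that $Q_R$ is Hermitian and positive semidefinite (the paper only asserts this), and your constant choice $v\equiv 1$ keeps the stated Gronwall lemma safely applicable.
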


\begin{proof}
 The integral equation corresponding to \eqref{ca} for the fundamental solution $Y(t,z)$ with the initial value  $Y(0, z) = I$ is, 
 \[Y(t)=I+\int_0^t A(s) Y(s)\,ds; \quad \text{ where } A(t) = -z J H(t).\] Taking the norm on both sides we get
\[
\| Y(t,z)\|\le 1+ |z|\|J\|\int_0^t \|H(s)\| \|Y(s)\|\,ds.\]
Then by using  Gr\"onwall's inequality we get the estimate
\[ \|Y(t,z)\|\le \exp\!\big(|z|\|J\|\int_0^t\|H(s)\|\,ds\big) .\]   Setting \[ C_R=1, M_R(z) =|z|\|J\|\sup_{0\le t\le R}\int_0^t\|H(s)\|\,ds ,\] we get, \[
\|Y(t;z)\|\le C_R e^{M_R(z)},\qquad 0\le t \le R.
.\] The integral defining $Q_R$ is finite since $Y(t,z)$ is continuous on $[0,R]$ and $H$ is   integrable on $[0,R]$.
\end{proof}

\begin{lemma}\label{lem:keyid}
With $Y(t,z)$ as above we have for a.e.\ $t\ge0$, we have the following identity
\begin{equation}\label{identity}
 \frac{d}{dt}\bigl(Y(t)^* J Y(t)\bigr) =2i\Im z\; Y(t)^* H(t)Y(t).
\end{equation}
Consequently for every $R>0$,
\begin{equation}\label{integrated}
Y(R)^*JY(R)-J = 2i\Im z\; Q_R.
\end{equation}
\end{lemma}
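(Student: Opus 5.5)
The plan is a direct computation: differentiate the product $Y(t)^*JY(t)$ using the canonical system \eqref{ca}, then integrate. The structural facts needed are $J^{-1}=-J$, $J^*=-J$ (so $J^2=-I$), and $H(t)^*=H(t)$, which holds because $H$ is positive semidefinite.

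First I rewrite \eqref{ca} as an explicit first-order equation. Multiplying $JY'=zHY$ on the left by $J^{-1}=-J$ gives
\[
Y'(t)=-zJH(t)Y(t)\quad\text{a.e.},
\]
which is the form already used in the proof of Lemma~\ref{QR}. Taking adjoints and using $J^*=-J$, $H^*=H$ yields
\[
(Y')^*=-\bar z\,Y^*HJ^*=\bar z\,Y^*HJ.
\]

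Next, since $Y$ is absolutely continuous on each $[0,R]$, the product $Y^*JY$ is absolutely continuous there and the product rule holds a.e.:
\[
\frac{d}{dt}(Y^*JY)=(Y')^*JY+Y^*JY'.
\]
Substituting the two expressions above and using $J^2=-I$:
\begin{itemize}
\item the first term becomes $\bar z\,Y^*HJ^2Y=-\bar z\,Y^*HY$;
\item the second term becomes $-z\,Y^*J^2HY=z\,Y^*HY$.
\end{itemize}
Their sum is $(z-\bar z)Y^*HY=2i\Im z\,Y^*HY$, which is \eqref{identity}.

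For \eqref{integrated}, I integrate \eqref{identity} from $0$ to $R$. The fundamental theorem of calculus is valid for absolutely continuous functions, and the right-hand side is integrable by Lemma~\ref{QR}, whose integral is exactly $2i\Im z\,Q_R$. The left-hand side evaluates to $Y(R)^*JY(R)-Y(0)^*JY(0)$, and $Y(0)=I$ gives the term $-J$.

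The only point needing care is justifying the product rule and the fundamental theorem of calculus for the product $Y^*JY$. I will handle this by noting that products of absolutely continuous functions on a compact interval are absolutely continuous. The sign bookkeeping with $J^*=-J$ is the other place where an error could slip in.
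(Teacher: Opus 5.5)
Your proposal is correct and follows the paper's proof: you differentiate $Y^*JY$ using $Y'=-zJHY$ and its adjoint (your $(Y')^*=\bar z\,Y^*HJ$ gives the paper's $(Y')^*J=-\bar z\,Y^*H$), then integrate over $[0,R]$ using $Y(0)=I$. Your justification via absolute continuity of the product, which is what licenses the a.e.\ product rule and the fundamental theorem of calculus, is a detail the paper leaves implicit.
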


\begin{proof}
Differentiating the left-hand side of \eqref{identity}  and using $Y'=-z J H Y $ and its adjoint $Y'^* J  =-\overline z Y^* H  $, we get \eqref{identity}. Integrating on $[0,R]$ yields \eqref{integrated}.
\end{proof}
Let   $\mathbb B  =\{v\in\mathbb C^{2d}:\|v\|=1\}$, the unit sphere in $\C$.  
\begin{lemma}\label{lem:minimizers} For each $R>0$, define the Hermitian positive semidefinite matrix $Q_R$ as in Lemma~\ref{QR}. Then the continuous function $\varphi_R:\mathbb B \to [0,\infty)$,
\[
\varphi_R(v):=v^* Q_R v=\int_0^R (Y(t)v)^*H(t)(Y(t)v)\,dt,
\]
attains its minimum on    $ \mathbb B $. 
\end{lemma}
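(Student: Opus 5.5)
The plan is to reduce the claim to the extreme value theorem: show that $\mathbb B$ is compact and that $\varphi_R$ is a continuous real-valued function on it.

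\emph{Step 1: compactness.} The set $\mathbb B=\{v\in\mathbb C^{2d}:\|v\|=1\}$ is the unit sphere in the finite-dimensional space $\mathbb C^{2d}\cong\mathbb R^{4d}$. It is closed, being the preimage of $\{1\}$ under the continuous map $v\mapsto\|v\|$, and it is bounded, so by Heine--Borel it is compact.

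\emph{Step 2: $\varphi_R$ is a well-defined, real, nonnegative quadratic form.} By Lemma~\ref{QR}, $\|Y(t;z)\|\le C_Re^{M_R(z)}$ on $[0,R]$. Since $\operatorname{tr}H\equiv1$ and $H\ge0$, we have $\|H(t)\|\le1$. Hence
\[
t\mapsto Y(t)^*H(t)Y(t)
\]
is entrywise integrable on $[0,R]$, and $Q_R$ is a finite Hermitian matrix. Linearity of the integral gives
\[
v^*Q_Rv=\int_0^R (Y(t)v)^*H(t)(Y(t)v)\,dt,
\]
so the two expressions for $\varphi_R$ coincide. Positive semidefiniteness of $H(t)$ makes the integrand nonnegative a.e., so $\varphi_R(v)\in[0,\infty)$. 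Because $Q_R$ is Hermitian, $v^*Q_Rv$ is real.

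\emph{Step 3: continuity.} Write $v^*Q_Rv=\sum_{j,k}\overline{v_j}(Q_R)_{jk}v_k$. This is a polynomial in the real and imaginary parts of the coordinates of $v$, hence continuous. Quantitatively,
\[
|\varphi_R(v)-\varphi_R(w)|\le\|Q_R\|\,\bigl(\|v\|+\|w\|\bigr)\,\|v-w\|,
\]
so $\varphi_R$ is Lipschitz on $\mathbb B$.

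\emph{Step 4: conclusion.} A continuous real function on a compact set attains its minimum. Alternatively, the minimum equals the smallest eigenvalue $\lambda_{\min}(Q_R)\ge0$, and it is attained at a corresponding unit eigenvector, by the spectral theorem for Hermitian matrices.

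There is no real obstacle in this argument. The only point requiring care is Step 2, namely the finiteness of $Q_R$, which depends on the local bound of Lemma~\ref{QR} together with $\|H\|\le1$.
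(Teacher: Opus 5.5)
Your proof is correct and follows essentially the same route as the paper: the quadratic form $\varphi_R$ is continuous on the compact unit sphere, so the extreme value theorem gives a minimizer. Your Step 2 (finiteness of $Q_R$ via Lemma~\ref{QR} and $\|H(t)\|\le 1$) and your spectral-theorem remark, which identifies the minimum with the smallest eigenvalue of $Q_R$, make explicit details that the paper leaves implicit.
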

Let $v_R$ be a minimizer and set
\[
\lambda_{\min}(R):=\varphi_R(v_R)=\min_{\|v\|=1}\varphi_R(v)\ge0.
\]
\begin{proof}
Since $Q_R$ is Hermitian and continuous in $R$, the Rayleigh quotient $\varphi_R$ is continuous on the compact sphere; hence a minimizer exists. The minimizer may be chosen with unit norm.
\end{proof}
\begin{theorem}\label{HIS}
Let $z \in \C^+$. Then a canonical system \eqref{ca} with the Hamiltonian $\operatorname{tr} H \equiv 1,$ admits a nontrivial H-integrable solution $y(\cdot,z)$. 
\end{theorem}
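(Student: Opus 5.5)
The plan is to produce the $H$-integrable solution as a limit of the normalized minimizers $v_R$ of Lemma~\ref{lem:minimizers}. I work on $[0,\infty)$ and write $Q_R$ as in Lemma~\ref{QR}. The minimizers $v_R$ lie on the compact sphere $\mathbb B$, so I choose $R_n\to\infty$ with $v_{R_n}\to v_\infty\in\mathbb B$, and I set $y(t)=Y(t;z)v_\infty$. This is nontrivial because $v_\infty\neq0$ and $Y$ is invertible. Since $Q_R$ is increasing in $R$ in the Loewner order, for fixed $R$ and $R_n\ge R$ I get
\[
v_{R_n}^*Q_Rv_{R_n}\le v_{R_n}^*Q_{R_n}v_{R_n}=\lambda_{\min}(R_n).
\]
Letting $n\to\infty$ gives $v_\infty^*Q_Rv_\infty\le\liminf_n\lambda_{\min}(R_n)$, using continuity of $v\mapsto v^*Q_Rv$. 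Letting $R\to\infty$ then gives $\int_0^\infty y^*Hy\,dt\le\sup_R\lambda_{\min}(R)$. It therefore suffices to prove that $\lambda_{\min}(R)$ is bounded uniformly in $R$.

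For the uniform bound I use the identity \eqref{integrated}. For any $v\in\mathbb C^{2d}$,
\[
2\Im z\; v^*Q_Rv=-i\,\bigl(v^*Y(R)^*JY(R)v-v^*Jv\bigr).
\]
The form $w\mapsto -i\,w^*Jw$ is Hermitian with signature $(d,d)$, since $-iJ$ has eigenvalues $\pm1$, each of multiplicity $d$. So I want to choose $v$ with $w=Y(R)v$ lying in the $d$-dimensional subspace $N$ where $-i\,w^*Jw\le0$, for example the span of the $-1$-eigenvectors of $-iJ$. The preimage $Y(R)^{-1}N$ is a $d$-dimensional subspace, so I can pick a unit vector $v$ in it. This gives
\[
2\Im z\; v^*Q_Rv\le i\,v^*Jv\le\|J\|=1.
\]
Hence $\lambda_{\min}(R)\le v^*Q_Rv\le 1/(2\Im z)$ for every $R$, using $\Im z>0$. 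Combined with the limiting argument above, this yields $\int_0^\infty y^*Hy\le 1/(2\Im z)<\infty$.

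The main obstacle is the sign and convention check in \eqref{integrated}. I need to confirm that, with $Y'=-zJHY$, the right-hand side is indeed $+2i\Im z\,Q_R$, so that the chosen indefinite subspace gives an upper bound rather than a lower bound. If the sign turns out reversed, I would instead take $N$ to be the positive cone of $-iJ$. Either way the argument only needs a $d$-dimensional subspace on which the boundary term has the favorable sign, and this is guaranteed by the signature $(d,d)$. The trace normalization enters only through Lemma~\ref{QR}, which ensures $Q_R$ is finite and increasing in $R$.
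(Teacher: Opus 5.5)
Your proof is correct. It shares the paper's skeleton: unit minimizers $v_R$, a convergent subsequence $v_{R_n}\to v_\infty$, monotonicity of $Q_R$ in $R$, and a monotone limit $R\to\infty$. The decisive step, a uniform bound on $\lambda_{\min}(R)$, is obtained by a genuinely different argument. Your sign check goes through: from $Y'=-zJHY$ one gets $(Y^*JY)'=(z-\bar z)Y^*HY$. Hence $2\Im z\,v^*Q_Rv=-i\,w^*Jw+i\,v^*Jv$ with $w=Y(R)v$. Taking $w$ in the $(-1)$-eigenspace $N$ of the Hermitian involution $-iJ$ gives $\lambda_{\min}(R)\le 1/(2\Im z)$.

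The paper never uses the $(d,d)$ signature of $-iJ$. It tests the minimizing property only against a fixed vector $w$, which it itself notes is useless. It then passes to the renormalized vectors $Y(n)v_n/\|Y(n)v_n\|$. These control only the ratio $\lambda_{\min}(n)/\|Y(n)v_n\|^2$ and say nothing about $\lambda_{\min}(n)$ itself when $\|Y(n)v_n\|$ grows. The claim that ``iterating subsequence selection finitely many times'' yields a subsequence with $\lambda_{\min}(n)$ bounded is not justified. Your $R$-dependent test vector, chosen in the $d$-dimensional subspace $Y(R)^{-1}N$, supplies exactly the ingredient the paper's argument is missing. It also gives the explicit estimate $\int_0^\infty y^*Hy\,dt\le 1/(2\Im z)$.

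Your route also yields more than the statement, and the minimizers are not needed. Every $v\in Y(R)^{-1}N$ satisfies $v^*Q_Rv\le\|v\|^2/(2\Im z)$. Passing to a limit of these $d$-dimensional subspaces in the compact Grassmannian gives a $d$-dimensional subspace of $\mathcal S$. That is, there are at least $d$ linearly independent $H$-integrable solutions for every $z\in\C^+$. Finally, as you observe, trace normalization is used only to make $H$ locally integrable, so the argument works for any locally integrable $H\ge0$.
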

 \begin{proof} 

The proof uses the minimizers $v_R$ and extracts a limit direction $v_\infty$ that yields an $H$-integrable solution. For each integer $n\ge1$ let $R=n$ and choose a unit vector $v_n:=v_{R}$ as in Lemma~\ref{lem:minimizers}, so
\[
\lambda_{\min}(n)=v_n^* Q_n v_n=\int_0^n (Y(t)v_n)^*H(t)(Y(t)v_n)\,dt.
\]
The sequence $\{v_n\}$ lies in the compact unit sphere $\mathbb B$, so there exists a subsequence (still indexed by $n$) and a vector $v_\infty\in\mathbb B$ with
\[
v_n\to v_\infty\qquad\text{as }n\to\infty.
\] By Lemma~\ref{lem:keyid} (equation \eqref{integrated}) we have for each $n$,
\begin{equation}\label{rayleigh-id}
\lambda_{\min}(n)=\frac{1}{2i\Im z}\; v_n^*\bigl(Y(n)^*JY(n)-J\bigr)v_n.
\end{equation}
Note the right-hand side is real (since the left-hand side is real); indeed $Y(n)^*JY(n)-J$ is skew-Hermitian so the quadratic form with $v_n$ is purely imaginary, and division by $2i\Im z$ yields a real number.\\

We now use the trace-normalization $\operatorname{tr}H\equiv1$ to study the asymptotic behavior of $\lambda_{\min}(n)$. The argument proceeds by contradiction: suppose $\lambda_{\min}(n)\to\infty$ along every subsequence. Then by \eqref{rayleigh-id} the magnitude of
\(
v_n^*Y(n)^*JY(n)v_n
\)
must grow like $2\Im z\,\lambda_{\min}(n)$ (plus a bounded contribution from $v_n^*J v_n$), hence tend to $\pm i\infty$ in the imaginary direction. We now show this is impossible.

First observe the operator norm bound (from Lemma~\ref{QR}) on $Y(n)$:
\[
\|Y(n)\|\le C_n e^{M_n n}\quad\text{for some }C_n,M_n>0.
\]
Hence
\[
\bigl|v_n^*Y(n)^*JY(n)v_n\bigr|
\le \|Y(n)v_n\|^2 \|J\|
\le \|Y(n)\|^2\|v_n\|^2\|J\|
\le \|J\| C_n^2 e^{2M_n n}.
\]
Thus the growth of the boundary term is at most exponential in $n$.

On the other hand, consider the trace of $Q_n$:
\[
\operatorname{tr}Q_n=\int_0^n \operatorname{tr}\bigl(Y(t)^* H(t)Y(t)\bigr)\,dt
=\int_0^n \operatorname{tr}\bigl(H(t)Y(t)Y(t)^*\bigr)\,dt.
\]
Since $H(t)\geq0$ and $\operatorname{tr}H(t)=1$, for each $x$ we have the inequality (with $\|\cdot\|$ the operator norm) and using the inequality, for $A,B \geq 0$, $\operatorname{tr}(AB)\le \operatorname{tr}(A)\,\operatorname{tr}(B).
$ we get
\[
\operatorname{tr}\bigl(H(t)Y(t)Y(t)^*\bigr)\le \|Y(t)Y(t)^*\|\,\operatorname{tr}H(t)=\|Y(t)\|^2.
\]
Therefore
\[
\operatorname{tr}Q_n \le \int_0^n \|Y(t)\|^2\,dt.
\]
But the right-hand side grows at most exponentially in $n$ as well (by Lemma~\ref{lem:fundamental}), so $\operatorname{tr}Q_n$ is finite for each $n$ and cannot explode faster than exponential rate.

Now recall that $ \lambda_{\min}(n)\le \frac{1}{2d}\operatorname{tr}Q_n$, because the average eigenvalue equals $\operatorname{tr}Q_n/(2d)$. Consequently the growth of $\lambda_{\min}(n)$ is at most exponential in $n$  consistent with the boundary term growth. Thus the hypothesis that $\lambda_{\min}(n)\to\infty$ does not contradict any a priori bound by itself.

To produce the needed compactness, we proceed differently: because $v_n$ are minimizers, for any fixed unit vector $w\in\mathbb C^{2d}$ and any $n\ge1$,
\[
\lambda_{\min}(n)=v_n^*Q_n v_n \le w^*Q_n w.
\]
Choose $w$ to be a fixed vector with $w^*Q_1 w <\infty$ (any unit vector works). By monotonicity $Q_n\geq Q_1$ for $n\ge1$, hence $w^*Q_n w\ge w^*Q_1 w$ and the bound above is not helpful in the lower direction. Instead, use the identity \eqref{rayleigh-id} and the fact that the term $v_n^*J v_n$ is bounded (because $\|v_n\|=1$ and $J$ is fixed). Therefore the only potentially unbounded contribution to $\lambda_{\min}(n)$ is from $v_n^*Y(n)^*JY(n)v_n$.

We now employ a compactness/renormalization trick: define
\[
w_n := \frac{Y(n)v_n}{\|Y(n)v_n\|}\in\mathbb B
\]
(if $Y(n)v_n=0$ for some $n$, then $\lambda_{\min}(n)=0$ and we are done, since that $v_n$ already yields an $H$-integrable solution on $[0,\infty)$; so assume $Y(n)v_n\ne0$). Passing to a subsequence we may assume $w_n\to w_\infty$ (unit vector). Rewriting \eqref{rayleigh-id} we obtain
\[
\lambda_{\min}(n)=\frac{\|Y(n)v_n\|^2}{2i\Im z}\; w_n^* J w_n - \frac{1}{2i\Im z} v_n^* J v_n.
\]
Write $w_n^* J w_n = i\alpha_n$ with $\alpha_n\in\mathbb R$ (since $w_n^*J w_n$ is purely imaginary). Then
\[
\lambda_{\min}(n)=\frac{\|Y(n)v_n\|^2}{2\Im z}\,\alpha_n + O(1).
\]
Because $\lambda_{\min}(n)\ge0$ and $\Im z>0$, we deduce $\alpha_n\ge -C/\|Y(n)v_n\|^2$ and hence any limit point $\alpha$ of $\{\alpha_n\}$ satisfies $\alpha\ge0$. Therefore along the chosen subsequence
\[
\frac{\lambda_{\min}(n)}{\|Y(n)v_n\|^2}=\frac{\alpha_n}{2\Im z}+o(1)
\]
is bounded. Consequently either $\{\lambda_{\min}(n)\}$ is bounded along the subsequence (good), or, if it grows, it grows comparably to $\|Y(n)v_n\|^2$. But in the latter case the ratio above has a finite nonnegative limit, so again we may pass to a further subsequence and obtain control on the normalized quadratic forms. Iterating subsequence selection finitely many times (possible because we are in finite dimensions) we arrive at a final subsequence along which $\lambda_{\min}(n)$ has a finite limit. Thus there exists a subsequence (still denoted $n$) with
\[
\Lambda := \lim_{n\to\infty} \lambda_{\min}(n)\in[0,\infty).
\]

For the subsequence along which $v_n\to v_\infty$ and $\lambda_{\min}(n)\to\Lambda<\infty$, we have for every fixed $T>0$ and for all sufficiently large $n$ (with $n>T$),
\[
\int_0^T (Y(t)v_n)^* H(t) (Y(t)v_n)\,dt
\le \int_0^n (Y(t)v_n)^* H(t) (Y(t)v_n)\,dt
= \lambda_{\min}(n).
\]
Letting $n\to\infty$ and using point-wise convergence $Y(t)v_n \to Y(t)v_\infty$ for each fixed $x$ and dominated convergence on $[0,T]$ (the integrands are bounded on $[0,T]$ by a uniform exponential bound), we obtain
\[
\int_0^T (Y(t)v_\infty)^* H(t) (Y(t)v_\infty)\,dt
\le \Lambda.
\]
Since $T>0$ was arbitrary, monotone convergence implies
\[
\int_0^\infty (Y(t)v_\infty)^* H(t) (Y(t)v_\infty)\,dt \le \Lambda <\infty.
\]
Thus $u(t):=Y(t)v_\infty$ is a nonzero solution (because $\|v_\infty\|=1$) of \eqref{ca} which is $H$-integrable on $[0,\infty)$. This completes the proof.
\end{proof}

Let $z\in \C$ define $ u(t,z) = Y(t,z)u_0, u_0\in \C^{2d}$ is also a solution of \eqref{ca}. Define the subspace of initial data producing $H-$ integrable solutions:
\[ \mathcal S = \{c \in \C^{2d}: y(t,z) = Y(t,z)c, \quad \int_0^{\infty} y(t,z)^* H(t) y(t,z) \ dt < \infty \}.\]

The dimension of  $\mathcal S$ is the number of linearly independent $H-$ integrable solutions to the system \eqref{ca}.

For any $z \in \C$, and any two solutions $u$ and $v$ of \eqref{ca} we have
\[ \frac{d}{dt} (u^*(x,z) Jv(t,z)) = (z-\bar{z}) u^* Hv\]
Integrating both sides over [0,N] yields
\begin{equation} \label{eq 2.12} u^*(N,z)Jv(N,z) - u^*(0,z)Jv(0,z) = (z-\bar{z}) \int_0^N u(t,z)^* H(t)v(t,z) \ dt.\end{equation}
 If $u(t,z)$ and $v(t,z)$ are $H-$ integrable solutions, then by Schwartz inequality 
 \[|\langle u, v\rangle| = \Big|\int_0^{\infty} u(t,z)^* H(t)v(t,z)dt \Big| \leq \| u\|\|v\| < \infty .\]
 It follows from \eqref{eq 2.12} that the following limits  \[ \lim_{N\rightarrow \infty } u^*(N,z)Jv(N,z), \quad \lim_{N\rightarrow \infty } u^*(N,z)Ju(N,z) \] exist. 
Suppose $Y(t,z)$  be the fundamental matrix solution to \eqref{ca} with the initial value
\begin{equation} Y(0,z) = \begin{pmatrix} I & 0 \\ 0 & I\end{pmatrix} \end{equation} 

 In order to find the dimension, we will use the properties of isotropic subspace of a symplectic space. Let us present the basics of such a subspace. A \emph{symplectic space} $(V, \omega)$ is a real (or complex) vector space $V$ equipped with a 
nondegenerate, skew-symmetric bilinear form
\[
\omega : V \times V \to \mathbb{R},
\]
such that $ \omega(u,v) = -\omega(v,u), \quad \text{and} \quad 
\omega(u,v) = 0 \ \forall v \in V \implies u = 0.
$
The form $\omega$ is called a \emph{symplectic form}.
A subspace $W$ of a symplectic space $(V, \omega)$, is called \emph{isotropic} if the symplectic form vanishes on $W$; that is,
$
\omega(u, v) = 0 \quad \text{for all } u, v \in W.
$
Equivalently, $
W \subseteq W^{\perp_\omega},
$
where
\[
W^{\perp_\omega} = \{ x \in V : \omega(x, w) = 0 \ \text{for all } w \in W \}
\] is the \emph{symplectic orthogonal complement} of $W$. The subspace
 $W$ is \emph{Lagrangian} if $W$ is isotropic and 
  $\dim W = \tfrac{1}{2}\dim V$, equivalently $W = W^{\perp_\omega}$. These Lagrangian subspaces and their important properties are explained in \cite{FischerRemling2009}.

\begin{theorem}    
$\mathcal S$ is an isotropic subspace of $\C^{2d}.$ If $\operatorname{trace} H=1,$ then $\dim S \leq d .$\end{theorem}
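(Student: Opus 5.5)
The plan is to bound $\dim\mathcal S$ by a signature argument for the Hermitian form $a\mapsto i\,a^*Ja$ on $\C^{2d}$. The matrix $iJ$ is Hermitian with eigenvalues $\pm1$, each of multiplicity $d$. So any subspace on which $i\,a^*Ja$ is strictly of one sign (for $a\neq0$) has dimension at most $d$. I will also make precise the sense in which $\mathcal S$ is isotropic: it is isotropic for the \emph{boundary form at infinity}
\[
\beta(a,b):=\lim_{N\to\infty}(Y(N,z)a)^*J\,(Y(N,z)b),
\]
which exists for $a,b\in\mathcal S$ by \eqref{eq 2.12} and the Schwarz inequality, as noted before the statement. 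Linearity of $\mathcal S$ is immediate, since $Y(t,z)(\alpha a+b)$ is $H$-integrable whenever $Y(t,z)a$ and $Y(t,z)b$ are, by Minkowski's inequality in $L^2_H$. Throughout I fix $z\in\C^+$, the case in which Theorem~\ref{HIS} applies.

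\textbf{Step 1 (vanishing of the boundary form).} I will show that $\beta(a,b)=0$ for $a,b\in\mathcal S$, which is precisely the isotropy of $\mathcal S$ for $\beta$. By polarization of the sesquilinear form it suffices to treat $a=b$. Set $u=Y(\cdot,z)a$; then $\ell:=\lim_N u(N)^*Ju(N)$ exists and is purely imaginary. Suppose $\ell\neq0$. Then $|u(N)|^2\ge |u(N)^*Ju(N)|\ge |\ell|/2$ for all large $N$, so $u$ is bounded below in norm near infinity. The task is to turn this into a contradiction with $\int_0^\infty u^*Hu<\infty$ using $\operatorname{tr}H\equiv1$. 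A lower bound on $|u|$ alone does not control $u^*Hu$, because $H$ may be degenerate. The idea I would try first is to use the equation itself: $u'=-zJHu$, so
\[
|u'|\le |z|\,\|H^{1/2}\|\,|H^{1/2}u|\le |z|\,|H^{1/2}u|.
\]
This uses $\|H\|\le\operatorname{tr}H=1$. By Cauchy--Schwarz, $u$ then varies only by $O\bigl(\sqrt{L}\,\|u\|_{L^2_H([N,N+L])}\bigr)$ over windows of length $L$, and this quantity is $o(\sqrt L)$. So on long windows $u$ is nearly a constant vector $c_N$ with $|c_N|^2\gtrsim|\ell|$. The small tail of $\int u^*Hu$ then forces $\int_N^{N+L}c_N^*Hc_N\,dt$ to be small on these windows. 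I would then combine this with the identity $(z-\bar z)\int_N^{N+L}u^*Hu=u^*Ju\big|_N^{N+L}$ and try to reach a contradiction.

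\textbf{Step 2 (definiteness and dimension count).} Granting Step 1, apply \eqref{eq 2.12} with $u=v=Y(\cdot,z)a$, $a\in\mathcal S$, and let $N\to\infty$:
\[
-\,a^*Ja=(z-\bar z)\int_0^\infty u^*Hu\,dt,\qquad\text{so}\qquad i\,a^*Ja=2\,\Im z\int_0^\infty u^*Hu\,dt\ \ge0 .
\]
If some nonzero $a\in\mathcal S$ had $\int_0^\infty u^*Hu=0$, then $Hu=0$ a.e. and hence $u'=0$, so $u\equiv a$ is constant with $a^*Ha=0$ a.e. I would rule this out or treat it separately, which is the only delicate point in this step. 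Once it is excluded, $i\,a^*Ja>0$ on $\mathcal S\setminus\{0\}$. Then $\mathcal S$ meets the $d$-dimensional negative eigenspace of $iJ$ only in $0$, and $\dim\mathcal S\le 2d-d=d$. In fact the inequality $\lim_N i\,u(N)^*Ju(N)\le 0$ is already enough for this step, so Step 1 could be weakened to a one-sided statement if that is easier to prove.

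\textbf{Main obstacle.} I expect Step 1, the vanishing (or correct sign) of the limit $\lim_N u(N)^*Ju(N)$, to be the crux. This is the only place where $\operatorname{tr}H\equiv1$ enters, and it is a genuinely limit-point type statement. My fallback is Winkler's argument for the case $d=1$, which uses $\int_0^\infty\operatorname{tr}H\,dt=\infty$ (a consequence of trace normalization on a half-line). I would apply it to the restriction of the system to the two-dimensional subspace spanned by a hypothetical pair of $H$-integrable solutions with nonzero mutual boundary form.
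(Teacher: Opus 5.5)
\noindent\textbf{Gap: Step 1 is unproven, and false for $d\ge2$.} Step 1 is left at ``try to reach a contradiction'', yet it carries the entire dimension bound. Under the sole hypothesis $\operatorname{tr}H\equiv1$ it cannot be completed, because it is false for $d\ge2$.

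Take $d=2$ and $H(t)=\tfrac12\operatorname{diag}\bigl(1-b(t),\,b(t),\,1-b(t),\,b(t)\bigr)$ with $b(t)=\tfrac{1}{2(1+t)^2}$. This $H$ is trace-normed and even positive definite.
\begin{itemize}
\item The system decouples into two $2\times2$ canonical systems, in the coordinate pairs $(u_1,u_3)$ and $(u_2,u_4)$.
\item On the second pair the solutions are $w(t)=\exp\bigl(-\tfrac{z}{2}B(t)J_2\bigr)w_0$, where $J_2$ is the $2\times2$ symplectic matrix and $B(t)=\int_0^t b\to\tfrac12$. All of them are bounded and $H$-integrable, and they converge to a limit $w_\infty$ you may prescribe. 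Choosing $w_\infty=(1,-i)^T$ gives $\lim_N u(N)^*Ju(N)=-2i\neq0$, which contradicts Step 1.
\item The first pair has $\int_0^\infty(1-b)=\infty$ and contributes one more integrable solution.
\end{itemize}
Hence $\dim\mathcal S=3>d$.

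Your window heuristic breaks down exactly here. The tail of $\int u^*Hu$ only forces $c_N^*Hc_N$ to be small. Trace normalization controls the total mass of $H$, not its mass in a given direction, so $H$ can carry its trace in other directions forever. The divergence $\int_0^\infty\operatorname{tr}H=\infty$ forces the limit-point property only when $d=1$.

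The ``delicate point'' in Step 2 is also a genuine counterexample, not a technicality. For $d=2$ and $H\equiv e_1e_1^*$ one gets $\mathcal S=\{a:a_1=0\}$, of dimension $3$.

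Your fallback is sound only for $d=1$. There the classical limit-point theorem for trace-normed $2\times2$ systems on $[0,\infty)$ gives $\dim\mathcal S\le1$. For $d\ge2$ there is no ``restriction'' to a $2\times2$ canonical system, since the values $u(t)$ of two solutions do not stay in a fixed two-dimensional subspace.

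\medskip
\noindent\textbf{Comparison with the paper.} You were right not to aim at $c_1^*Jc_2=0$ on $\mathcal S$, which is what the paper's proof concludes. Your own identity $i\,a^*Ja=2\Im z\int_0^\infty u^*Hu$, valid whenever the boundary term at infinity vanishes, refutes it. For $d=1$ and $H\equiv\tfrac12 I_2$ one has $\mathcal S=\operatorname{span}\{(1,-i)^T\}$ and $c^*Jc=-2i$.

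The paper reaches isotropy by asserting $\phi(y_1,y_2)=\overline{\phi(y_2,y_1)}$. But $J^*=-J$ gives $\overline{\phi(y_2,y_1)}=-\phi(y_1,y_2)$ identically. Conjugating the second identity therefore carries no new information, so the paper's route does not supply the missing limit-point input either.

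What your Steps 1--2 actually establish is a conditional statement. Suppose the boundary form at infinity vanishes on $\mathcal S$ (limit-point case) and no nonzero solution has $Hu=0$ a.e. Then the form $a\mapsto i\,a^*Ja$ is positive definite on $\mathcal S$, and so $\dim\mathcal S\le d$. To close the argument you must add such a hypothesis or restrict to $d=1$.
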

\begin{proof}
For any $c_1, c_2 \in \mathcal S, $ with $y_1 = Y(t,z)c_1$ and $ y_2(t,z) = Y(t,z)c_2$ we obtain
\begin{equation} \label{si1}c_1^*Jc_2 = \lim_{N\rightarrow \infty} y_1(N,z)^*Jy_2(N,z)-(z-\bar{z}) \int_0^{\infty} y_1(t,z)^* H(t) y_2(t,z) \ dt  \end{equation}
Switching the role of $y_1$ and $y_2$ in \eqref{si1} we obtain \begin{equation} \label{si2}c_2^*Jc_1 = \lim_{N\rightarrow \infty} y_2(N,z)^*Jy_1(N,z)-(z-\bar{z}) \int_0^{\infty} y_2(t,z)^* H(t) y_1(t,z) \ dt  \end{equation}
Taking the complex conjugate on both sides and using the fact that both sides are   scalar complex numbers, and $J^*= -J $ we get,
\begin{equation*} \label{si3} - c_1^*Jc_2 = -\lim_{N\rightarrow \infty} y_2(N,z)^*Jy_1(N,z)-(\bar{z}-z) \int_0^{\infty} y_1(t,z)^* H(t) y_2(t,z) \ dt  \end{equation*}
and multiplying both sides by $-$ we get
\begin{equation} \label{si3}  c_1^*Jc_2 = \lim_{N\rightarrow \infty} y_1(N,z)^*Jy_2(N,z)-(z- \bar{z}) \int_0^{\infty} y_1(t,z)^* H(t) y_2(t,z) \ dt . \ \end{equation} Let $ \phi (y_1,y_2) = \lim_{N\rightarrow \infty} y_1(N,z)^*Jy_2(N,z)-(z-\bar{z}) \int_0^{\infty} y_1(t,z)^* H(t) y_2(t,z) \ dt .$ Then by \eqref{si1}, \eqref{si2}, and\eqref{si3} we see that $  \phi (y_1,y_2) = \overline { \phi (y_2,y_1)}.$
But \begin{align*} \overline { \phi (y_2,y_1)}= &  -\lim_{N\rightarrow \infty} y_1(N,z)^*Jy_2(N,z)-(\bar z- z) \int_0^{\infty} y_1(t,z)^* H(t) y_2(t,z) \ dt . \end{align*}
So, \begin{align*} \lim_{N\rightarrow \infty} & y_1(N,z)^*Jy_2(N,z)-  (z-\bar{z}) \int_0^{\infty} y_1(t,z)^* H(t) y_2(t,z) \ dt  \\  & =    -\lim_{N\rightarrow \infty} y_1(N,z)^*Jy_2(N,z)-(\bar z- z) \int_0^{\infty} y_1(t,z)^* H(t) y_2(t,z) \ dt \end{align*} which implies that $ \lim_{N\rightarrow \infty}y_1(N,z)^*Jy_2(N,z)=  (z-\bar{z}) \int_0^{\infty} y_1(t,z)^* H(t) y_2(t,z) \ dt $
Using this in \eqref{si1} we get $  c_1^*Jc_2 =0.$ So $S$ is isomorphic subspace of $\C^{2d}.$ This also imply that $ \dim \mathcal S \leq d.$ 
\end{proof}

\begin{theorem} Suppose $S \subset \mathbb{C}^{2d}$ 
a rank-$d$ subspace. If there exists a constant $\delta > 0$, $T \ge 0$ such that $
v^* H(t) v \ge \delta \|v\|^2 $    for all $ v \in S $  and almost every  $ t \ge T.$
  Then $ \dim \mathcal S = d$, that is, there are precisely $d$ linearly independent $H$-integrable solutions to \eqref{ca} .
   Any solution $u(t)$ with a nonzero component along the subspace orthogonal to $S$ satisfies
    \[
    \int_0^\infty u(t)^*H(t) u(t) \, dt = \infty.
    \]

\end{theorem}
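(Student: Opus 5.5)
The plan is to combine the upper bound $\dim\mathcal S\le d$ from the preceding theorem (isotropy of $\mathcal S$) with a matching lower bound $\dim\mathcal S\ge d$. The lower bound comes from a Weyl-disk type limiting argument over Lagrangian boundary conditions at $t=N$. The coercivity hypothesis on $S$ is then used to identify which solutions fail to be integrable.

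\emph{Lower bound.} Fix $z\in\C^+$ and $N>0$. Choose a Lagrangian subspace $L\subset\C^{2d}$ with $w^*Jw=0$ for all $w\in L$; for instance $L=\{(a,0)^T:a\in\C^d\}$. Set
\[
W_N=\{c\in\C^{2d}: Y(N,z)c\in L\}.
\]
Since $Y(N,z)$ is invertible, $\dim W_N=d$. For $c\in W_N$ and $u=Y(\cdot,z)c$, identity \eqref{eq 2.12} with $u=v$ gives
\[
(z-\bar z)\int_0^N u^*Hu\,dt=-c^*Jc .
\]
Hence
\[
\int_0^N u^*Hu\,dt\le \frac{\|c\|^2}{2\Im z}.
\]
The Grassmannian of $d$-planes in $\C^{2d}$ is compact, so along a subsequence $W_N\to W_\infty$ with $\dim W_\infty=d$. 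Every $c\in W_\infty$ is a limit of vectors $c_N\in W_N$ with $\|c_N\|\to\|c\|$.

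Fix $T>0$. For $N>T$, the bound gives $\int_0^T (Y c_N)^*H(Yc_N)\,dt\le \|c_N\|^2/(2\Im z)$. Letting $N\to\infty$ and using dominated convergence on $[0,T]$ (the uniform bound of Lemma~\ref{QR}, exactly as at the end of the proof of Theorem~\ref{HIS}), we get $\int_0^T (Yc)^*H(Yc)\,dt\le\|c\|^2/(2\Im z)$. Letting $T\to\infty$ by monotone convergence gives $c\in\mathcal S$. Therefore $W_\infty\subseteq\mathcal S$, so $\dim\mathcal S\ge d$, and the previous theorem yields $\dim\mathcal S=d$.

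\emph{Second claim.} The step I expect to be the main obstacle is relating the abstract subspace $S$ of the hypothesis to $\mathcal S$. As written, the decomposition "along the subspace orthogonal to $S$" must be read in terms of initial data, that is, relative to $\mathcal S$. Once $\dim\mathcal S=d$ is known, any $c\notin\mathcal S$ gives $\int_0^\infty u^*Hu=\infty$ by the definition of $\mathcal S$.

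The role of the condition $v^*H(t)v\ge\delta\|v\|^2$ on $S$ for $t\ge T$ is to make this identification quantitative. Write $P$ for the orthogonal projection onto $S$. For a solution $u$ I would use the positive semidefiniteness of $H$ and the Cauchy--Schwarz inequality for the form $H(t)$ to control $(Pu)^*H(Pu)\ge\delta\|Pu\|^2$ by $u^*Hu$ plus a cross term. I would then argue that $H$-integrable solutions have $Pu\in L^2([T,\infty))$, while a solution whose $S$-component is not square integrable cannot be $H$-integrable.

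I would first try to bound the cross term $2\,\mathrm{Re}\,(Pu)^*H(I-P)u$ by $\tfrac{\delta}{2}\|Pu\|^2+C\,((I-P)u)^*H(I-P)u$. Should this fail, I would fall back on reading the second claim purely relative to $\mathcal S$, where it follows at once from $\dim\mathcal S=d$.
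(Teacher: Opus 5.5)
\textbf{There is a genuine gap: the upper bound $\dim\mathcal S\le d$ you import is false, and so is the statement itself.}

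Your lower bound is correct and is the sound part of the proposal. The Lagrangian boundary condition $Y(N,z)c\in L$, the estimate $\int_0^N u^*Hu\,dt\le\|c\|^2/(2\Im z)$, compactness of the Grassmannian and the limit on $[0,T]$ give $\dim\mathcal S\ge d$ for every $z\in\C^+$. This does not use the hypothesis on $S$. The gap is the upper bound, which you take from the preceding theorem; that theorem is false. Its identity $c_1^*Jc_2=0$ already fails for $d=1$, $H\equiv\tfrac12 I$: the vector $c=(1,-i)^T$ gives the $H$-integrable solution $e^{izt/2}c$, but $c^*Jc=-2i$. Worse, $\dim\mathcal S\le d$ fails even under the present hypothesis. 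For $d=2$ take $H=\operatorname{diag}(h,g,h,g)$ with $h=\tfrac12(1-e^{-t})$ and $g=\tfrac12 e^{-t}$, so $\operatorname{tr}H\equiv1$. The system splits into the $(u_1,u_3)$ and $(u_2,u_4)$ blocks. Since $\int_0^\infty 2g\,dt=1$, both solutions of the second block are bounded by Gronwall and hence $H$-integrable. The first block adds its decaying solution, so $\dim\mathcal S=3>d$. Yet $S=\operatorname{span}(e_1,e_3)$ satisfies the hypothesis with $\delta=\tfrac14$, $T=\ln 2$. So does the Lagrangian subspace $\operatorname{span}(e_1+e_2,e_3-e_4)$, on which $v^*Hv=\tfrac14\|v\|^2$. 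The first claim is therefore false as stated. The warning sign was that your argument for it never used the coercivity hypothesis.

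The paper's proof contains no lower-bound argument and does not close the upper-bound gap either. It bounds the dimension of the hypothesis subspace $S$ by $\operatorname{rank}\int_T^\infty Y^*HY\,dt$, using three unjustified steps:
\begin{itemize}
\item $v^*Y^*HYv\ge\delta\|Yv\|^2$, which would need $Y(t)v\in S$;
\item finiteness of $v^*Qv$ on $S$, which is what must be proved;
\item $\operatorname{rank}H(t)\le d$, which is not assumed.
\end{itemize}
It then identifies $S$ with $\mathcal S$ and never addresses the second claim.

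The second claim is also false as written. With $d=1$, $H\equiv\tfrac12 I$, $S=\operatorname{span}(e_1)$, the $H$-integrable solution $e^{izt/2}(1,-i)^T$ has a nonzero component orthogonal to $S$. Your cross-term estimate is valid, but it only yields $\tfrac{\delta}{2}\|Pu\|^2\le u^*Hu+C\,((I-P)u)^*H(I-P)u$, and the last term is not controlled by $u^*Hu$. Your fallback reading relative to $\mathcal S$ holds trivially by the definition of $\mathcal S$, but it is not the stated claim.

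A correct version needs a stronger hypothesis, e.g.\ $H(t)\ge\delta I$ for a.e.\ $t\ge T$. Then an $H$-integrable $u=Yc$ has $u$ and $u'=-zJHu$ in $L^2(T,\infty)$, so $u(N)\to0$. Letting $N\to\infty$ in $u(N)^*Ju(N)-c^*Jc=(z-\bar z)\int_0^N u^*Hu\,dt$ gives $-i\,c^*Jc=-2\Im z\int_0^\infty u^*Hu\,dt$. This is negative for $0\ne c\in\mathcal S$, because the integral is positive when $H\ge\delta I$ on $[T,\infty)$ and $Y(t)$ is invertible. Since $-iJ$ has only $d$ positive eigenvalues, $\dim\mathcal S\le d$, and your lower bound gives equality. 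The upper bound comes from this definiteness of $-iJ$ on $\mathcal S$, not from isotropy.
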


\begin{proof}
Let $Y(t)$ be a fundamental matrix solution of the canonical system.
For any $v\in\mathbb{C}^{2d}$, the associated solution is $y(t)=Y(t)v$.

Define the positive semidefinite matrix
\[
Q := \int_T^\infty Y(t)^{*}H(t)Y(t)\,dt .
\]
Since $Y(t)v$ is $H$-integrable for every $v\in S$, we have $v^{*}Qv<\infty$
for all $v\in S$.
  $Q$ is positive definite on $S$. Take any nonzero $v\in S$.  
By hypothesis, for almost every $t\ge T$,
\[
v^{*}Y(t)^{*}H(t)Y(t)v 
\;\ge\; \delta \|Y(t)v\|^{2}.
\]
Since $Y(t)$ is invertible and continuous, the vector $Y(t)v$ cannot be
identically zero on $[T,\infty)$, hence
\[
\int_T^\infty \|Y(t)v\|^{2}\,dt >0.
\]
Therefore
\[
v^{*}Qv 
= \int_T^\infty v^{*}Y(t)^{*}H(t)Y(t)v\,dt
\;\ge\; \delta \int_T^\infty \|Y(t)v\|^{2}\,dt 
>0.
\]
Thus $Q$ is positive definite on $S$, and consequently
\[
\dim S = \operatorname{rank}(Q|_{S}) \le \operatorname{rank}(Q).
\]

For almost every $t$, the Hamiltonian $H(t)$ is a positive semidefinite
$2d\times 2d$ matrix of rank at most $d$.  
Therefore each matrix $Y(t)^{*}H(t)Y(t)$ also has rank at most $d$.
Since the integral of positive semidefinite matrices cannot have rank
exceeding the maximal rank of the integrand, we obtain
\[
\operatorname{rank}(Q) \le d.
\]
Thus,
\[
\dim S \;\le\; \operatorname{rank}(Q) \;\le\; d.
\]
But by assumption $S$ is a rank-$d$ subspace, so $\dim \mathcal S=d$.
Thus there are exactly $d$ linearly independent $H$-integrable solutions
of the canonical system.
\end{proof}
Now, we can classify the Limit-Point and Limit-Circle of the canonical systems \eqref{ca}.

\begin{defi} The canonical system \eqref{ca} is said to be in limit-circle at $+ \infty$ if all solutions are $H$-integrable. The system is said to be in limit-point at $+ \infty$, if there are at most $d$ linearly independent $H$-integrable soltuions. \end{defi}

\section{Conclusion}

In  summary, we investigated $2d$-dimensional canonical systems with positive semidefinite Hamiltonians $H(t)$ normalized by \(\operatorname{tr} H(t) \equiv 1\). Our first main result established the universal existence of at least one nontrivial $H$-integrable solution for every such system, demonstrating that finite-energy solutions necessarily arise under minimal structural assumptions on $H$. We further showed that the space of all $H$-integrable solutions has dimension at most $d$, revealing an intrinsic upper bound dictated by the geometry of the underlying symplectic structure.

Under an additional natural hypothesis on the Hamiltonian, we proved that this bound is sharp: the system then admits exactly $d$ linearly independent $H$-integrable solutions. This optimality result highlights a precise correspondence between structural restrictions on $H(t)$ and the integrable solution space.

Our analysis combines tools from symplectic geometry, rank-deficient Hamiltonians, and isotropic subspace theory. Beyond establishing new structural properties of higher-dimensional canonical systems, these results contribute a clearer understanding of the limit-point and limit-circle dichotomy in this broader setting. They lay the groundwork for future investigations into spectral theory, asymptotic behavior, and inverse problems for canonical systems of higher dimension.

\end{document}